\setlist[enumerate]{label=({\roman*})}
\newtheorem{theorem}{Theorem}
\newtheorem{proposition}[theorem]{Proposition}
\newtheorem{lemma}[theorem]{Lemma}
\theoremstyle{definition}
\newtheorem{remark}[theorem]{Remark}
\numberwithin{equation}{section}
\numberwithin{theorem}{section}
\def\<{\langle}
\def\>{\rangle}
\def\Z{\mathbb{Z}}
\def\R{\mathbb{R}}
\def\C{\mathbb{C}}
\def\wtM{\widetilde M}
\def\wtX{\widetilde X}
\def\oG{\overline{G}}
\def\oM{\overline M}
\begin{document}
\bibliographystyle{plain} \title[Abelian Liv\v{s}ic theorem]
{Abelian Liv\v{s}ic theorems for Anosov flows}

\author{Richard Sharp} 
\address{Mathematics Institute, University of Warwick,
Coventry CV4 7AL, U.K.}
\email{R.J.Sharp@warwick.ac.uk}

\thanks{\copyright 2025. This work is licensed by a CC BY license.}

\keywords{}

\begin{abstract}
We give two short proofs of the abelian Liv\v{s}ic theorem of Gogolev and Rodriguez Hertz.
We show that these proofs may be extended to give new abelian Liv\v{s}ic theorems
for positive density sets of null-homologous orbits and for amenable covers.
\end{abstract}

\maketitle

%
%
%
%
%
%
%
%
%

\section{introduction}

Let $M$ be a compact smooth Riemannian manifold  and let $X^t : M \to M$ be a transitive 
Anosov flow,
generated by the vector field $X$. (An Anosov flow is transitive if it has a dense orbit.)
Let $\mathcal P$ denote the set of prime periodic orbits of the flow
and let $\ell(\gamma)$ denote the least period of $\gamma \in \mathcal P$.
(A periodic orbit is called prime if it traverses its image only once.)
For $f : M \to \R$ and $\gamma \in \mathcal P$, write
\[
\int_\gamma f = \int_0^{\ell(\gamma)} f(X^t(x_\gamma)) \, dt
\]
for any $x_\gamma$ on $\gamma$.

A classical result is the Liv\v{s}ic (or Livshits) periodic orbit theorem: if 
a H\"older continuous function $f : M \to \mathbb R$ satisfies
\begin{equation}\label{eq:liv}
\int_\gamma f = 0 \quad \forall \gamma \in \mathcal P
\end{equation}
then $f = L_Xu$, where $u : M \to \mathbb R$ is a H\"older continuous function
(with the same exponent as $f$) which is continuously differentiable along flow lines and $L_X$ is the Lie derivative
\cite{Liv72}. 
A more recent addition to this theory is the beautiful abelian Liv\v{s}ic theorem of Andrey 
Gogolev and Federico Rodriguez Hertz, which characterises H\"older functions $f$ for which
it is only assumed that (\ref{eq:liv}) holds for \emph{null-homologous} periodic orbits.
For $\gamma \in \mathcal P$ (or more generally for any closed curve on $M$), let 
$[\gamma] \in H_1(M,\mathbb Z)$ denote the homology class of $\gamma$. 
We say that $X^t$ is \emph{homologically full} if every class in $H_1(M,\mathbb Z)$ is represented by 
an element of $\mathcal P$.
Write $\mathcal P_0 = \{\gamma \in \mathcal P \hbox{ : } [\gamma] =0\}$.

\begin{theorem}[Gogolev and Rodriguez Hertz \cite{GRH}]\label{abelian_livsic_theorem}
Let 
$X^t : M \to M$ be a homologically full transitive Anosov flow.
If $f : M \to \mathbb R$ is H\"older continuous and satisfies 
\begin{equation}\label{eq:homology_null_periodic}
\int_\gamma f = 0 \quad \forall \gamma \in \mathcal P_0
\end{equation}
then
\[
f = \omega(X) + L_Xu,
\]
for some smooth closed $1$-form $\omega$ and some H\"older continuous $u : M \to \mathbb R$
which is continuously differentiable along flow lines.
\end{theorem}

\begin{remark} If the first Betti number of $M$ is zero, so that $H_1(M,\mathbb Z)$ is
finite, then $M$ admits no non-zero closed $1$-form and it is easy to see that Theorem
\ref{abelian_livsic_theorem} reduces to the classical Liv\v{s}ic theorem.
(There are many examples of transitive Anosov flows on manifolds with zero first Betti number. The simplest are given by geodesic flows over two-dimensional hyperbolic orbifolds of genus zero.
Such orbifolds have the form $S = \mathbb H^2/\Gamma$, where $\mathbb H^2$ is the hyperbolic plane and 
$\Gamma < \mathrm{PSL}(2,\R)$ is a Fuchsian group such that $S$ has $p$ cone points satisfying $p \ge 5$ or $p=4$ and the orders of these points are not all $2$, or $p =3$ and the sum of the reciprocals of the orders is less than $1$.
Then we can define the geodesic flow on $M = \mathrm{PSL}(2,\R)/\Gamma$, which has zero first Betti number (Lemma 2.1 of \cite{Dehornoy}), and the flow is transitive Anosov.
More examples are given by surgery, see \cite{Goodman} and \cite{FoulonHasselblatt}.
These flows are automatically homologically full since the periodic orbits are equidistributed
with respect to any finite group \cite{ParryPollicott86}, \cite{Sunada84}.)
\end{remark}

We give two new and short proofs of Theorem \ref{abelian_livsic_theorem}, one 
based on the weighted equidistribution theorems for null-homologous periodic orbits 
in \cite{CS} and the other on older asymptotic counting results of Lalley \cite{lalley},
Sharp \cite{Sharp93} and
Babillot and Ledrappier \cite{bab-led}. Our second proof also gives
an abelian Liv\v{s}ic theorems for sets of null-homologous orbits with positive density
(analogous to the positive density version of the classical Liv\v{s}ic theorem
obtained
recently by Dilsavor and Marshall Reber \cite{DilsavorReber}). 

\begin{theorem}\label{th:positive_proportion}
Let 
$X^t : M \to M$ be a homologically full transitive Anosov flow.
If, for some $\Delta>0$, a H\"older continuous function $f : M \to \mathbb R$ satisfies 
\begin{equation}\label{eq:positive_proportion_null_periodic}
\limsup_{T \to \infty}
\frac{\#\left\{\gamma \in \mathcal P_0 \hbox{ : } T<\ell(\gamma) \le T+\Delta, \ \int_\gamma f =0\right\}}
{\#\{\gamma \in \mathcal P_0 \hbox{ : } T<\ell(\gamma) \le T+\Delta\}}
>0
\end{equation}
then
\[
f = \omega(X) + L_Xu,
\]
for some smooth closed $1$-form $\omega$ and some H\"older continuous $u : M \to \mathbb R$
which is continuously differentiable along flow lines.
\end{theorem}

We also have an abelian Liv\v{s}ic theorem for amenable covers.
For the next result, $\widetilde X^t : \widetilde M \to \widetilde M$ is the lifted flow on
a regular cover of $M$. Let $G$ be the covering group, with identity element $e$.
For each $\gamma \in \mathcal P$, we can associated its \emph{Frobenius class}
$\langle \gamma \rangle$, which is a conjugacy class in $G$. If $\tilde \gamma$ is any lift 
of $\gamma \in \mathcal P$ then the initial and final endpoints of $\tilde \gamma$ are related
by the action of some $g \in \langle \gamma \rangle$, and $\tilde \gamma$ is itself a periodic orbit if and only if $\langle \gamma \rangle =\{e\}$.
Write $\widetilde{\mathcal P}_0 = \{\gamma \in \mathcal P \hbox{ : } \langle \gamma \rangle =\{e\}\}$.

\begin{theorem}\label{th:amenable}
Let 
$X^t : M \to M$ be a homologically full transitive Anosov flow and let
$\widetilde M$ be a regular cover of $M$ such that
\begin{itemize}
\item
the lifted flow $\widetilde X^t : \widetilde M \to \widetilde M$ is topologically transitive, and
\item the covering group $G$ is amenable.
\end{itemize}
If $f : M \to \mathbb R$ is H\"older continuous and satisfies 
\begin{equation}\label{eq:amenable_null_periodic}
\int_\gamma f = 0 \quad \forall \gamma \in \widetilde{\mathcal P}_0
\end{equation}
then
\[
f = \omega(X) + L_Xu,
\]
for some smooth closed $1$-form $\omega$ and some H\"older continuous $u : M \to \mathbb R$
which is continuously differentiable along flow lines.
\end{theorem}

In the next section we give some background
on Anosov flows and cohomology. In section \ref{sec:two_proofs}, we give two proofs of Theorem \ref{abelian_livsic_theorem}
(the second of which also implies Theorem \ref{th:positive_proportion}).
In section \ref{sec:general_abelian_cover}, we consider general abelian covers.
In section \ref{sec:amenable}, we discuss how to extend the abelian Liv\v{s}ic theorem to amenable covers 
and prove Theorem \ref{th:amenable}.

I am grateful to Stephen Cantrell, Caleb Dilsavor, Andrey Gogolev and Federico Rodriguez Hertz for helpful
comments on an early version of this note.

\section{Anosov flows and cohomology}\label{sec:anosov}

Let $M$ be a compact Riemannian manifold and $X^t : M \to M$ be a $C^1$ transitive
Anosov flow \cite{anosov} (for a comprehensive modern treatment, see \cite{fisher}). 
We have
$H_1(M,\mathbb Z) \cong \mathbb Z^b \oplus \mathfrak F$, where $b=b_1(M) \ge 0$ is the first Betti
number of $M$ and $\mathfrak F$ is a finite abelian group.

We say that
$X^t$ is \emph{homologically full} if the map $\mathcal P \to H_1(M,\Z) : \gamma \mapsto
[\gamma]$ is a surjection. 
(See Remark \ref{rem:examples_of_hf} for examples.)
This automatically implies that the flow is weak-mixing 
(since an Anosov flow fails to be weak-mixing only when it is a constant
suspension of an Anosov diffeomorphism \cite{plante},
it which case it can have no
null-homologous periodic orbits). 
From now on, we assume that $b \ge 1$ and ignore any torsion in $H_1(M,\mathbb Z)$
(so we interpret $[\gamma]$ as an element of $H_1(M,\mathbb Z)/\mathfrak F$).

We interpret the real cohomology group
$H^1(M,\mathbb R)$
as the de Rham cohomology group. i.e. the quotient of
the space of smooth closed $1$-forms on $M$ by the space of smooth exact $1$-forms. 
We write $[\omega]$ for the cohomology class determined by the closed $1$-form $\omega$,
i.e. if $[\omega]=[\omega']$ then $\omega-\omega'$ is an exact form, and we say that $\omega$ has integral periods if $\int_c \omega \in \mathbb Z$ for every smooth closed curve in $M$.
We can choose forms $\omega_1,\ldots,\omega_b$ with integral periods so that
$[\omega_1],\ldots,[\omega_b]$ is a basis for $H^1(M,\mathbb R)$. 
This choice determines a fixed isomorphism between $H_1(M,\mathbb Z)/\mathfrak F$ and $\mathbb Z^b$
given by
\[
[c] \mapsto \left(\int_c \omega_1,\ldots,\int_c \omega_b\right),
\]
where $c$ is a smooth closed curve on $M$ and $[c] \in H_1(M,\mathbb Z)/\mathfrak F$ is its homology class.
If $\gamma \in \mathcal P$ then we have
\[
 \left(\int_\gamma \omega_1,\ldots,\int_\gamma \omega_b\right)
=\left(\int_\gamma \omega_1(X),\ldots,\int_\gamma \omega_b(X) \right).
\]

Let $\mathcal M(X)$ denote the set of $X^t$-invariant Borel probability measures on $M$.
Given $\nu \in \mathcal M(X)$, we define the associated
winding cycle (or asymptotic cycle) $\Phi_\nu \in H_1(M,\R)$ by
\[
\langle \Phi_\nu, [\omega] \rangle = \int \omega(X) \, d\nu,
\]
where $\langle \cdot,\cdot \rangle$ is the duality pairing
(Schwartzman \cite{Sch}, Verjovsky and Vila Freyer \cite{VV}).

\begin{remark}\label{rem:examples_of_hf}
There are many examples of homologically full transitive Anosov flows.
Most obviously, geodesic flows $X^t : T^1N \to T^1N$, where $M=T^1N$ is the unit-tangent bundle over a compact 
Riemannian manifold $N$ with negative sectional curvatures. Homological fullness holds here because every non-trivial free homotopy class in $N$ contains a closed geodesic (see Theorem 2.2 in Chapter 12 of \cite{doCarmo})
and closed geodesics on $N$ are in natural one-to-one correspondence with periodic orbits for the geodesic flow, while 
$H_1(N,\Z)$ and $H_1(T^1N,\Z)$ are isomorphic if $\dim(M) \ge 3$ and satisfy $H_1(T^1N,\Z) \cong H_1(N,\Z)
\oplus \Z/(-\chi(N))\Z$, where $\chi(N)$ is the Euler characteristic of $N$.
More generally, any contact Anosov flow is transitive and homologically full.
(Recall that $X^t : M \to M$ is a contact flow if $\dim(M)=2k+1$ and there exists a $1$-form $\alpha$ on $M$ such that
the volume form $\alpha \wedge (d\alpha)^k$ is $X^t$-invariant.)
To see this, we use the characterization in the proof of Theorem 1 of \cite{Sharp93} that $X^t$ is
homologically full if and only if $\Phi_{\mu_\varphi}=0$ for some
 H\"older continuous function $\varphi : M \to \R$. Since the volume measure $m$ corresponding to $\alpha \wedge (d\alpha)^k$ is the equilibrium state of a H\"older continuous function, Corollary 4.10 of \cite{plante} gives that $\Phi_m=0$.
\end{remark}

In addition to the cohomology of the manifold $M$, we will also consider 
the dynamical cohomology associated to the flow $X^t : M \to M$.
For $0<\theta<1$, let $C^\theta(M,\mathbb R)$ denote the space of H\"older continuous functions 
from $M$ to $\mathbb R$ with exponent $\theta$.
The theorems we prove in this paper concern  H\"older continuous functions $f : M \to \R$ and, given
a particular $f$, we fix $\theta$ to be its H\"older exponent. 
A function $g \in C^\theta(M,\R)$  is called a ($C^\theta$ dynamical) coboundary if $g= L_Xu$, for some function
$u: M \to \mathbb R$ which is differentiable along flow lines,
and two functions are cohomologous if their difference is a coboundary. We write $B^\theta(M,\mathbb R)$ for the intersection of the set of 
flow coboundaries with
$C^\theta(M,\mathbb R)$, i.e. $B^\theta(M,\mathbb R)$ is the set of functions $L_Xu$ such that
 $u \in C^\theta(M,\mathbb R)$ with $u$ $C^\theta$-differentiable along flow lines.
Finally we define the dynamical cohomology group $\mathcal H_\theta^1(X,\mathbb R)$ by
$\mathcal H_\theta^1(X,\mathbb R) := C^\theta(M,\mathbb R)/B^\theta(M,\mathbb R)$.
Given a H\"older continuous function $g \in C^\theta(M,\R)$, we write $[g]$ for its 
($C^\theta$ dynamical) cohomology class (where the notation suppresses the dependence on $\theta$).
We can define a linear map $\iota : H^1(M,\R) \to \mathcal H_\theta^1(X,\mathbb R)$ as follows.
Given $w \in H^1(M,\mathbb R)$, choose a smooth closed $1$-form $\omega$ in this class. Then define
$\iota(w) = [\omega(X)]$, i.e. the ($C^\theta$ dynamical) cohomology class of the function $\omega(X) \in C^\theta(M,\mathbb R)$.

\begin{lemma}\label{lem:iota_injective}
The map $\iota : H^1(M,\R) \to \mathcal H_\theta^1(X,\mathbb R)$ is an injection.
\end{lemma}

\begin{proof}
Suppose that $\iota([\omega])=0$. Then 
\[
\langle [\gamma],[\omega] \rangle= \int_\gamma \omega = \int_\gamma \omega(X) =0
\]
for every $\gamma \in \mathcal P$. Since $\{[\gamma] \hbox{ : } \gamma \in \mathcal P\}$ generates $H_1(M,\Z)$ 
as a group (this is true even without the assumption that the flow is homologically full \cite{ParryPollicott86}, \cite{Sunada84}), we can conclude that $[\omega]=0$, so $\iota$ is an injection.
\end{proof}

We will also use the Bruschlinsky theory of cohomology \cite{Bruschlinsky}.
Let $K = \{z \in \C \hbox{ : } |z|=1\}$ denote the unit circle. Then we can interpret
$H^1(M,\Z)$ as the set of continuous functions $u : M \to K$ modulo functions homotopic to the identity
(i.e. modulo functions of the form $e^{2\pi i h(x)}$, where $h \in C(M,\R)$), and we let $[u]$ be the cohomology 
class represented by $u$.
Furthermore, it is shown in \cite{Sch} that we can assume the representative $u$ is continuously differentiable along 
flow lines, in which case we have
\[
\langle [\gamma],[u]\rangle = \frac{1}{2\pi i} \int_\gamma \frac{L_Xu}{u}
\]
for all $\gamma\in \mathcal P$.
We will use the following lemma, which appeared as Proposition 3.7 in \cite{GRH}.

\begin{lemma}\label{lem:discrete_periods_imply_cohomological}
Let $f : M \to \mathbb R$ be a H\"older continuous function with H\"older exponent $\theta$ such that
the set of $f$-periods $\left\{ \int_\gamma f \hbox{ : } \gamma \in 
\mathcal P\right\}$ is contained in a discrete subgroup of $\R$. Then 
$[f]\in \iota(H^1(M,\R))$, i.e. $f$ is cohomologous (in $C^\theta(M,\mathbb R)$)
to a function of the form $\omega(X)$, for some 
smooth closed $1$-form $\omega$.
\end{lemma}

\begin{proof}
Suppose that the set of $f$-periods is contained in $c\Z$, for some $c>0$. Then $e^{2\pi i\int_\gamma c^{-1}f}=1$ for all
$\gamma \in \mathcal P$. Applying the Liv\v{s}ic theorem for $K$, there is a function $u : M \to K$,
$C^1$ along flow lines, such that
\[
u(X^tx) = u(x) e^{2\pi i\int_0^t c^{-1}f(X^sx) \, ds},
\]
for all $x \in M$. Hence
\[
c^{-1}f = \frac{1}{2\pi i} \frac{L_Xu}{u}.
\]
Now choose a smooth closed $1$-form $\eta$ in the cohomology class $[u] \in H^1(M,\mathbb Z)$.
For each $\gamma \in \mathcal P$, we then have
\[
\int_\gamma c^{-1} f = \frac{1}{2\pi i} \int_\gamma \frac{L_X}{u}
= \langle [\gamma],[u] \rangle = \int_\gamma \eta = \int_\gamma \eta(X).
\]
It follows from the classical Liv\v{s}ic theorem that $f$ is cohomologous $\omega(X)$, where $\omega=c\eta$.
\end{proof}

Let $\varphi : M \to \mathbb R$ be H\"older continuous. We define its pressure $P(\varphi)$ by
\[
P(\varphi) = \sup\left\{h(\nu) + \int \varphi \, d\nu \hbox{ : } \nu \in \mathcal M(X)\right\}
\]
and there is a unique $\mu_\varphi \in \mathcal M(X)$, called the equilibrium state for $\varphi$,
at which the supremum is attained. The following result is fundamental.

\begin{lemma}\label{lem:fundamental}
If $\varphi,\psi : M \to \mathbb R$ are H\"older continuous then $\mu_\varphi =\mu_\psi$
if and only if $\varphi-\psi$ is a cohomologous to a constant.
\end{lemma}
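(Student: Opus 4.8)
The plan is to prove both directions. The easy direction is that if $\varphi - \psi = c + L_X u$ for a constant $c$ and a suitable $u$, then $\mu_\varphi = \mu_\psi$. This follows because adding a constant to a potential does not change the equilibrium state (it shifts the pressure by $c$ but the argmax is unchanged), and because a flow coboundary $L_X u$ integrates to zero against every $X^t$-invariant measure, hence $h(\nu) + \int \varphi \, d\nu$ and $h(\nu) + \int \psi \, d\nu$ differ by the constant $c$ for every $\nu \in \mathcal M(X)$; so the supremum is attained at the same measure. One should be slightly careful that $u$ is only differentiable along flow lines, but the identity $\int L_X u \, d\nu = 0$ still holds by the standard argument (approximate, or use that $t \mapsto u(X^t x)$ is $C^1$ and apply invariance).

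For the converse — which I expect to be the main obstacle — suppose $\mu_\varphi = \mu_\psi =: \mu$. The standard route is through the characterization of equilibrium states by their pressure function / the fact that two H\"older potentials with the same equilibrium state have proportional (in fact equal up to a constant, after normalizing) "Gibbs" behaviour on periodic orbits. Concretely, I would pass to a symbolic model: a transitive Anosov flow is, via Markov sections (Bowen--Ratner), coded as a suspension of a subshift of finite type $\sigma$ under a H\"older roof function $r$, and H\"older functions on $M$ pull back to H\"older functions on the suspension, with flow-cohomology corresponding to cohomology in the suspension. On the base, one reduces to discrete Bowen--Parry theory: two H\"older functions on a mixing SFT with the same equilibrium state must be cohomologous up to a constant. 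This last statement is classical and follows, e.g., from the fact that the equilibrium state determines the pressures $P(\varphi + t g)$ for all H\"older $g$ and all $t$ (via $\frac{d}{dt}\big|_{0} P(\varphi + tg) = \int g \, d\mu_\varphi$), hence determines $\int g \, d\mu$ for all $g$; combined with the variational characterization this forces the two potentials to have the same Gibbs measure, and the uniqueness part of the Liv\v{s}ic theorem (periodic orbit sums determine the cohomology class) then gives that $\varphi - \psi$ is cohomologous to a constant.

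Alternatively, and perhaps more cleanly for the flow setting, I would argue directly using periodic orbits and the classical Liv\v{s}ic theorem quoted in the introduction. If $\mu_\varphi = \mu_\psi$, then for the one-parameter family $\varphi_t = \psi + t(\varphi - \psi)$ one shows the pressure function $t \mapsto P(\varphi_t)$ is affine: its second derivative is the variance of $\varphi - \psi$ with respect to $\mu_{\varphi_t}$, and one checks this variance vanishes (using that at $t = 0$ and $t = 1$ the equilibrium state is the same $\mu$, together with convexity of pressure, forces the variance to be identically zero on $[0,1]$). Vanishing asymptotic variance of a H\"older function for a weak-mixing flow is equivalent to that function being cohomologous to a constant — this is the well-known "zero variance" criterion, itself proved via the central limit theorem or via the Liv\v{s}ic-type argument on periodic orbits. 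Hence $\varphi - \psi = c + L_X u$ with $u$ H\"older and $C^1$ along flow lines, as required. The technical heart in either approach is the zero-variance $\Longleftrightarrow$ coboundary-up-to-constant equivalence; everything else is bookkeeping (symbolic coding, invariance of coboundary integrals, convexity of pressure).
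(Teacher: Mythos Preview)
Your first route---pass to a Bowen--Ratner symbolic model, reduce to the base subshift of finite type, invoke the classical fact (Parry--Pollicott, Proposition~3.6) that two H\"older potentials on a mixing SFT with the same equilibrium state are cohomologous up to a constant, then transfer back via periodic-orbit sums and the ordinary Livsi\v{c} theorem---is exactly what the paper does. The only refinement in the paper's write-up is that it first normalises so that $P(\varphi)=P(\psi)=0$, which makes the additive constant in the SFT cohomology vanish and lets one read off $\int_\gamma\varphi=\int_\gamma\psi$ for every $\gamma\in\mathcal P$ directly.

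Your second route, via convexity of pressure, is a genuinely different and perfectly valid alternative. The key observation that $P'(0)=P'(1)=\int(\varphi-\psi)\,d\mu$ (since $\mu_\varphi=\mu_\psi=\mu$) together with convexity forces $t\mapsto P(\psi+t(\varphi-\psi))$ to be affine, hence the asymptotic variance of $\varphi-\psi$ vanishes, and the zero-variance criterion gives the coboundary-plus-constant conclusion. This avoids symbolic coding altogether at the cost of importing the zero-variance criterion (itself usually proved symbolically or via a CLT). The paper's approach is more self-contained in its citations; yours is conceptually cleaner and stays on $M$.
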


\begin{proof}
This is a standard result but it is hard to find a clear reference. First, we note that
the ``if'' direction is trivial.
For the other direction, we can proceed using symbolic dynamics.
Suppose $\mu_\varphi =\mu_\psi$. Without loss of generality, we can add constants to 
$\varphi$ and $\psi$ so that $P(\varphi)=P(\psi)=0$.
 By the classical results of Bowen
\cite{Bowen} and Ratner \cite{Ratner}, we can find a suspension flow $\sigma^t : \Sigma^r \to \Sigma^r$
over a mixing subshift of finite type $\sigma : \Sigma \to \Sigma$, where $r : \Sigma \to \mathbb R^{>0}$
is H\"older continuous, and a H\"older continuous surjection $\pi : \Sigma^r \to M$ that is one-to-one 
on a residual set and satisfies $X^t \circ \pi = \pi \circ \sigma^t$. 
Furthermore, $\pi^*(\mu_\varphi)
= \mu_{\varphi \circ \pi}$ and $\pi^*(\mu_\psi)=\mu_{\psi \circ \pi}$
(and $\pi$ is a.e. one-to-one between the respective pairs of measures), where the measures on the right 
are equilibrium states with respect to the suspension flow, and $P(\varphi \circ \pi)=P(\psi\circ \pi)=0$. 
Thus, if $\mu_\varphi =\mu_\psi$
then $\mu_{\varphi \circ \pi}=\mu_{\psi \circ \pi}$. 
 If we define
$\Phi, \Psi : \Sigma \to \mathbb R$ by
\[
\Phi(x) = \int_0^{r(x)} \varphi \circ \pi(\sigma^t(x,0)) \, dt
\mbox{ and }
\Psi(x) = \int_0^{r(x)} \psi \circ \pi(\sigma^t(x,0)) \, dt
\]
then (by Proposition 6.1 of \cite{PP}) $P(\Phi)=P(\Psi)=0$ and
\[
\mu_{\varphi \circ \pi} = \frac{m_\Phi \times \mathrm{Leb}}{\int r \, dm_\Phi}
\mbox{ and } 
\mu_{\psi \circ \pi} = \frac{m_\Psi \times \mathrm{Leb}}{\int r \, dm_\Psi},
\]
where $m_\Phi$ and $m_\Psi$ are the equilibrium states of $\Phi$ and $\Psi$, respectively, with respect to $\sigma : \Sigma \to \Sigma$.
We can then deduce that $m_\Phi=m_{\Psi}$. By Proposition 3.6 of \cite{PP}, $\Phi$ and $\Psi$ are 
cohomologous. (Here, we have use that $P(\Phi)=P(\Psi)=0$ to ensure the additive constant is zero.)
We can then deduce that $\Phi$ and $\Psi$ have the same sums around each $\sigma$-periodic orbit
and hence that $\int_\gamma \varphi - \int_\gamma \psi =0$ for all $\gamma \in \mathcal P$.
Applying, Liv\v{s}ic's theorem, $\varphi$ and $\psi$ are cohomologous.
\end{proof}

\section{Two proofs of Theorem \ref{abelian_livsic_theorem}}\label{sec:two_proofs}

We will now prove Theorem \ref{abelian_livsic_theorem}. 

\begin{proof}[First proof of Theorem \ref{abelian_livsic_theorem}]
Following section 5 of \cite{CS}, given a H\"older continuous function
$\varphi : M \to \mathbb R$, we can find a unique $ \xi(\varphi) \in \mathbb R^b$ such that
the equilibrium state of $\varphi+ \sum_{i=1}^b \xi_i(\varphi) \omega_i(X)$,
which we shall denote by $\mu(\varphi)$,
satisfies $\Phi_{\mu(\varphi)}=0$ and
\[
h(\mu(\varphi)) = \int \varphi \, d\mu(\varphi) = \sup\left\{
h(\nu) + \int \varphi \, d\nu \hbox{ : } \nu \in \mathcal M_X \mbox{ and } \Phi_\nu=0\right\}. 
\]
Furthermore, from Theorem 6.7 of \cite{CS}, we see that $\mu$ is a weak$^*$ limit
 of averages
 of null-homologous orbital measures. More precisely, for all continuous functions
 $\psi : M \to \mathbb R$, we have
 \begin{equation}\label{eq:coles-sharp}
 \lim_{T \to \infty} 
 \left(\sum_{\substack{\gamma \in \mathcal P_0 \\ T < \ell(\gamma) \le T+1}}
 e^{\int_\gamma \varphi}\right)^{-1}
 \sum_{\substack{\gamma \in \mathcal P_0 \\ T < \ell(\gamma) \le T+1}} 
 e^{\int_\gamma \varphi}
 \frac{ \int_\gamma\psi }{\ell(\gamma)}
 = \int \psi \, d\mu(\varphi).
 \end{equation}
 
 Now let $f : M \to \mathbb R$ be a H\"older continuous function satisfying $\int_\gamma f =0$ for all $\gamma \in \mathcal P_0$.
If we compare the cases $\varphi=0$ and $\varphi=f$ then the corresponding terms on the left hand side 
of 
(\ref{eq:coles-sharp}) are equal
 and so
we obtain that $\mu(0)=\mu(f)$.
Applying Lemma \ref{lem:fundamental}, we see that
\[
f = L_Xu + \sum_{i=1}^b (\xi_i(0)-\xi_i(f))\omega_i(X) +c,
\]
where $u : M \to \mathbb R$ is a H\"older function which continuously differentiable along flow lines
and $c \in \mathbb R$. We can see from (\ref{eq:coles-sharp}) (with $\psi=f$)
that $\int f \, d\mu(0)=0$ and, combined with $\Phi_{\mu(0)}=0$, this gives $c =0$, completing the proof.
\end{proof}

\begin{proof}[Second proof of Theorem \ref{abelian_livsic_theorem}]
An alternative, slightly longer, argument is to show that the failure of Theorem \ref{abelian_livsic_theorem}
is inconsistent with the periodic orbit counting results of \cite{bab-led}, \cite{lalley}, \cite{Sharp93}.
We fix a H\"older continuous function $f : M \to \mathbb R$ satisfying (\ref{eq:homology_null_periodic}). Then, clearly,
\[
\#\{\gamma \in \mathcal P_0 \hbox{ : } T<\ell(\gamma) \le T+1\}
= \#\left\{\gamma \in \mathcal P_0 \hbox{ : } T<\ell(\gamma) \le T+1, \  \int_\gamma f=0\right\}.
\]
Now, we have from \cite{Sharp93} that
\begin{equation}\label{eq:to_be_shown_impossible}
\#\{\gamma \in \mathcal P_0 \hbox{ : } T<\ell(\gamma) \le T+1\}
\sim C\frac{e^{\alpha T}}{T^{1+b/2}}, \qquad \mbox{as } T \to \infty,
\end{equation}
for some $C>0$ and $\alpha>0$. We aim to show that if $f$ is not cohomologous to $\omega(X)$, for some closed $1$-form $\omega$ (i.e. that the conclusion of Theorem \ref{abelian_livsic_theorem} fails to hold), then the asymptotic (\ref{eq:to_be_shown_impossible}) cannot be true, giving a contradiction.

Suppose that  $f$ is not cohomologous to $\omega(X)$, for some closed $1$-form $\omega$.
Then, in particular, by Lemma \ref{lem:discrete_periods_imply_cohomological}, the set of $f$-periods
$\{\int_\gamma f \hbox{ } \gamma \in \mathcal P\}$ is not contained in a discrete subgroup of $\R$.
We will use Theorem 1.2 of \cite{bab-led} and define a function $F : M \to \R^{b+1}$ by 
$F = (f,\omega_1(X),\ldots,\omega_b(X))$, so that 
\[
\int_\gamma F = \left(\int_\gamma f, [\gamma]\right),
\]
and define
\[
\mathcal C(F)= \left\{\int F \, d\nu \hbox{ : } \nu \in \mathcal(X)\right\} \subset \R^{b+1}.
\]
Let $\Gamma$ denote the smallest closed subgroup of $\R^{b+1}$ 
containing
$\{\int_\gamma F \hbox{ : } \gamma \in \mathcal P\}$;
clearly, $\Gamma = \R \times \Z^b$.
We also let $\widetilde \Gamma$ denote the smallest closed subgroup of $\R^{b+2}$ 
containing
$
\{(\ell(\gamma),\int_\gamma F) \hbox{ : } \gamma \in \mathcal P\}$.

Let $g_0 : \R \to \R^+$ be the indicator function of the interval $(-1,0]$ and let $g : \Gamma \to \R^+$ be defined by
$g(y_0,y_1,\ldots,y_b) = \kappa(y_0)\delta_0(y_1,\ldots,y_b)$, where $\delta_0 :\Z^b \to \R^+$ is the indicator 
function of $\{0\}$ and $\kappa : \R \to \R^+$ is any continuous compactly supported function satisfying $\kappa(0)=1$.
Then
\[
\#\left\{\gamma \in \mathcal P_0 \hbox{ : } T<\ell(\gamma) \le T+1, \  \int_\gamma f=0\right\}
= \sum_{\gamma \in \mathcal P} g_0(\ell(\gamma)-T) \, g\left(\int_\gamma F\right).
\]
The right hand side is of the form considered in Theorem 1.2 of \cite{bab-led}
(the theorem requires $g_0$ to be continuous but an easy approximation argument allows one to pass from continuous functions
to the indicator function of an interval).
Thus we will have that
\begin{equation}\label{eq:the_other_asymptotic}
\sum_{\gamma \in \mathcal P} g_0(\ell(\gamma)-T) \, g\left(\int_\gamma F\right)
\sim 
C'\frac{e^{\alpha' T}}{T^{1+(b+1)/2}}, \qquad \mbox{as } T \to \infty,
\end{equation}
for some $0<\alpha' \le \alpha$ and  $C'>0$, provided we can show that $0 \in \mathrm{int}(\mathcal C(F))$ and that 
$\widetilde \Gamma = \R \times \Gamma$
(Assumption (A) in \cite{bab-led}). (Actually, one can show that $\alpha=\alpha'$ but this is not required for the argument.)

\medskip
\noindent
\emph{Justification of $0 \in \mathrm{int}(\mathcal C(F))$.} Suppose that $\langle a,F \rangle = a_0f+ \sum_{i=1}^b a_i\omega_i(X)$ is cohomologous
to a constant $c$ for some $a = (a_0,a_1,\ldots,a_b) \in \R^{b+1}$. As in the first proof above, $\int f \, d\mu(0)=0$
and $\Phi_{\mu(0)}=0$, so that $\int F \, d\mu(0)=0$ and $c=0$. If $a_0 \ne 0$, then $f$ is cohomologous to
$a_0^{-1}(\sum_{i=1}^b a_i\omega_i(X))$, which is ruled out by our initial assumption. So $a_0=0$ and hence
\[
\langle (a_1,\ldots,a_b), [\gamma]\rangle =0 
\quad \forall \gamma \in \mathcal P.
\]
Since $\{[\gamma] \hbox { : } \gamma \in \mathcal P\}$ generates $H_1(M,\Z)$, we must have $(a_1,\ldots,a_b)=0$
and hence $a=0$.
If we define $\mathfrak p_F : \R^{b+1}\to \R$ by $\mathfrak p_F(w) = P(\langle w,F \rangle)$,
then it is a standard result that
 $\nabla \mathfrak p_F$ is a diffeomorphism from $\R^{b+1}$ to its image $\mathrm{Im}(\nabla \mathfrak p_F)$
and that 
\[
\nabla \mathfrak p_F(w) = \int F \, d\mu^w,
\]
where $\mu^w$ is the equilibrium state for $\langle w,F \rangle$ (see page 164 of \cite{lalley}).
Then
\[
\mathrm{Im}(\nabla \mathfrak p_F)=\left\{\int F \, d\mu^w \hbox{ : } w \in \R^{b+1}\right\} 
\]
is an open subset of $\mathcal C(F)$, and hence is contained in its interior. 
The claimed result then follows from
\[
\nabla \mathfrak p_F((0,\xi_1(0),\ldots,\xi_b(0))) = \int F \, d\mu(0) =0.
\]

\medskip
\noindent
\emph{Justification of $\widetilde \Gamma = \R \times \Gamma$.}
Clearly, $\widetilde \Gamma$ is a closed subgroup of $\R \times \Gamma$. If it is a proper subgroup, then
there will be a non-trivial character of $\R \times \Gamma$ which constantly takes the value $1$ on $\widetilde \Gamma$.
So, to prove the claim, we will show that no such non-trivial character can exist.

A character of $\R \times \Gamma$ has the form
$
\Xi_{\alpha,a}(\ell,x_0,x_1,\ldots,x_b)
:=e^{2\pi i (\alpha \ell +\sum_{j=0}^b a_jx_j)}
$,
with $\alpha \in \R$ and $a=(a_0,a_1,\ldots,a_b) \in \R \times (\R/\Z)^b$. If $\Xi_{\alpha,a}$ is trivial on $\widetilde \Gamma$ then
\begin{equation}\label{eq:trivial_on_wtG}
\Xi_{\alpha,a}\left(\ell(\gamma),\int_\gamma f,\int_\gamma \omega_1(X),\ldots,\int_\gamma 
\omega_b(X) \right)
=1 \quad \forall \gamma \in \mathcal P
\end{equation}
For brevity, we will write 
$\omega = \sum_{i=1}^b a_i\omega_i$.
Let $K = \{z \in \mathbb C \hbox{ : } |z|=1\}$.
As in the proof of Lemma \ref{lem:discrete_periods_imply_cohomological}, applying the Liv\v{s}ic theorem for the circle
$K$ gives us a function $u :M \to K$, $C^1$ along flow lines,
such that
\[
u(X^tx) = u(x) e^{2 \pi i\alpha t + 2\pi i\int_0^t (a_0f(X^sx) +\omega(X)(X^sx)) \, ds},
\]
for all $x \in M$, and hence
\[
\frac{1}{2\pi i} \frac{L_Xu}{u} = \alpha  +a_0f + \omega(X).
\]
Since $f$ and $\omega(X)$ integrate to zero with respect to $\mu(0)$,
 we obtain
\[
\alpha = \int \frac{1}{2\pi i} \frac{L_Xu}{u} \, d\mu(0).
\]
However, as in the proof of the lemma,
$u$ also represents a Bruschlinsky cohomology class $[u]$ in $H^1(M,\mathbb Z)$
(\cite{Bruschlinsky}, \cite{Sch}) and 
we have
\[
0=\langle \Phi_{\mu(0)},[u]\rangle = \int \frac{1}{2\pi i} \frac{L_Xu}{u} \, d\mu(0),
\]
so that $\alpha=0$. 
Hence, (\ref{eq:trivial_on_wtG}) reduces to
\[
\exp\left(2\pi i \left(a_0\int_\gamma f +\langle (a_1,\ldots, a_b), [\gamma]\rangle\right)\right)=1 
\quad \forall \gamma \in \mathcal P,
\]
which, by the definition of $\Gamma$, implies that $a=0$ also. So $\Xi_{\alpha,a}=\Xi_{0,0}$, the trivial character
and hence $\widetilde \Gamma = \R \times \Gamma$.

We have now shown (\ref{eq:the_other_asymptotic}), which contradicts (\ref{eq:to_be_shown_impossible}).
\end{proof}

\begin{remark}
In the justification that $0$ is in the interior of $\mathcal C(F)$, one might notice that, for any $a \in \R$, one has
\[
\nabla \mathfrak p_F((a,\xi_1(0),\ldots,\xi_b(0))) = \int F \, d\mu(af) =0,
\]
which contradicts results on the strict convexity of pressure. This leads to a third proof of Theorem 
\ref{abelian_livsic_theorem}.
\end{remark}

One sees that the second proof leads to a proof of Theorem
\ref{th:positive_proportion}. (I am grateful to Andrey Gogolev for this observation.)

\begin{proof}[Proof of Theorem \ref{th:positive_proportion}]
Suppose that $f : M \to \R$ is a H\"older continuous function satisfying the condition
(\ref{eq:positive_proportion_null_periodic}). Under this hypothesis, we have
\[
\#\left\{\gamma \in \mathcal P_0 \hbox{ : } T<\ell(\gamma) \le T+1, \  \int_\gamma f=0\right\}
\le \sum_{\gamma \in \mathcal P} g_0(\ell(\gamma)-T) \, g\left(\int_\gamma F\right),
\]
where $g_0$ and $g$ are as in the second proof of Theorem \ref{abelian_livsic_theorem}.
For a contradiction, suppose that $f$ 
s not cohomologous to $\omega(X)$, for some closed $1$-form $\omega$.
If $\int f \, d\mu(0) =0$ then we can use the arguments above to show that
$
\#\{\gamma \in \mathcal P_0 \hbox{ : } T<\ell(\gamma) \le T+\Delta, \ \int_\gamma f =0\}
=o(\#\{\gamma \in \mathcal P_0 \hbox{ : } T<\ell(\gamma) \le T+\Delta\}),
$
contradicting (\ref{eq:positive_proportion_null_periodic}).
On the other hand, if $\int f \, d\mu(0) \ne 0$ then we use a large deviations argument to show that, for any $\epsilon>0$,
\[
\lim_{T \to \infty}
\frac{\#\left\{\gamma \in \mathcal P_0 \hbox{ : } T<\ell(\gamma) \le T+\Delta, \ \left|\frac{1}{\ell(\gamma)}\int_\gamma f -\int f\, d\mu(0)\right|\ge \epsilon\right\}}
{\#\{\gamma \in \mathcal P_0 \hbox{ : } T<\ell(\gamma) \le T+\Delta\}}
=0,
\]
which, if $\epsilon$ is chosen sufficiently small, also contradicts (\ref{eq:positive_proportion_null_periodic}).
\end{proof}

\begin{remark}\label{rem:weighted_positive_proportion}
In \cite{DilsavorReber}, the authors also establish a weighted result where, for a H\"older continuous function 
$\varphi : M \to \R$, one replaces counting with summing the terms $\exp (\int_\gamma \varphi)$. This type of result also holds in our setting: writing $\mathcal P_0(T,\Delta)=\{\gamma \in \mathcal P_0 : T <\ell(\gamma) \le T+\Delta\}$,
if
\[
\limsup_{T \to \infty}
\frac{\sum_{\gamma \in \mathcal P_0(T,\Delta), \, \int_\gamma f=0} \exp(\int_\gamma \varphi)}
{\sum_{\gamma \in \mathcal P_0(T,\Delta)} \exp(\int_\gamma \varphi)}
>0
\]
then
\[
f = \omega(X) + L_Xu,
\]
for some smooth closed $1$-form $\omega$ and some H\"older continuous $u : M \to \mathbb R$
which is continuously differentiable along flow lines. Following the model of the second proof of Theorem 
\ref{abelian_livsic_theorem}, this can be shown using arguments from the proof of the weighted counting result
(Theorem 6.1) in \cite{CS}.
\end{remark}

\section{General abelian covers}\label{sec:general_abelian_cover}
Let $\oM$ be any regular abelian cover of $M$, with covering group $A$ (of rank $k \ge 1$).
For simplicity of exposition, we assume that $A$ is torsion-free.
The homology class $[c]$ of a closed curve on $M$ projects to an element $[c]_A\in A$.
We say that $X^t : M \to M$ is $A$-full if the map $\mathcal P \to A : \gamma \mapsto [\gamma]_A$ is a surjection.
If $X^t$ is homologically full then
(since $[\gamma]=0$ implies $[\gamma]_A =0$) it is trivial that the conclusion of Theorem 
\ref{abelian_livsic_theorem} holds if $f$ integrates to zero over every periodic orbit 
satisfying $[\gamma]_A=0$.
However, if the flow is $A$-full (but not homologically full) then the situation is slightly more subtle.

To discuss this further, we consider the vector spaces $A \otimes \R$ (which corresponds to a quotient 
of $H_1(M,\R)$) and $(A \otimes \R)^*$ (which corresponds to a subspace of $H^1(M,\R)$). To 
each $\nu \in \mathcal M(X)$, we can associate a \emph{relative} winding cycle
$\Phi_\nu^A \in A \otimes \R$, defined by
\[
\langle \Phi_\nu^A,[\omega]\rangle = \int \omega(X) \, d\nu,
\]
where $\omega$ is a closed $1$-form with $[\omega] \in (A \otimes \R)^*$. If $X^t$ is $A$-full then we can find equilibrium states $\mu(\varphi)$ as above with $\Phi_{\mu(\varphi)}^A=0$.

Writing  $\mathcal P_0^A =\{\gamma \in \mathcal P \hbox{ : } [\gamma]_A=0\}$, we have the following theorem.

\begin{theorem}\label{th:general_abelian_covers}
Let 
$X^t : M \to M$ be an $A$-full transitive Anosov flow.
If $f : M \to \mathbb R$ is H\"older continuous and satisfies 
\begin{equation}\label{null_periodic_A}
\int_\gamma f = 0 \quad \forall \gamma \in \mathcal P_0^A
\end{equation}
then
\[
f = \omega(X) + L_Xu,
\]
for some smooth closed $1$-form $\omega$ in $(A \otimes \R)^*$
and some H\"older continuous $u : M \to \mathbb R$
which is continuously differentiable along flow lines.
\end{theorem}

\begin{proof}
The first proof of Theorem \ref{abelian_livsic_theorem} will apply here provided the weighted equidistribution result
(\ref{eq:coles-sharp}) holds with $\mathcal P_0$ replaced by $\mathcal P_0^A$.
If we look at the proof of the weighted  equidistribution result  in \cite{CS} (Theorem 6.7), we see that 
(rather than a detailed asymptotic)
we only require the exponential growth rate result given in 
Corollary 6.2 of \cite{CS}, which tells us that
\begin{equation}\label{eq:growth_rate}
\lim_{T \to \infty} \frac{1}{T} \log \sum_{\substack{\gamma \in \mathcal P_0^A \\ T < \ell(\gamma) \le T+1}}
 e^{\int_\gamma \varphi} = \beta(\varphi) :=P\left(\varphi + \sum_{i=1}^k \xi_i(\varphi) \omega_i(X)\right),
\end{equation}
where now $k = \dim (A \otimes \R)^*$, $[\omega_1],\ldots,[\omega_k]$ form an integral basis for $(A \otimes \R)^*$ and 
the $\xi_i(\varphi)$ minimizes the pressure function on the right hand side.

We outline proof of (\ref{eq:growth_rate}). Suppose first that $\beta(\varphi)>0$. Following the analysis in \cite{Sharp93}, we see that $\beta(\varphi)$ is the abscissa of convergence of the Dirichlet series
\[
\sum_{\gamma \in \mathcal P_0^A} \ell(\gamma)^q e^{\int_\gamma \varphi -s\ell(\gamma)},
\]
where $q=[k/2]$, and hence 
\[
\beta(\varphi) = \limsup_{T \to \infty} \frac{1}{T} \log \sum_{\substack{\gamma \in \mathcal P_0^A
\\ \ell(\gamma) \le T}} \ell(\gamma)^q e^{\int_\gamma \varphi}.
\]
Since $\beta(\varphi)>0$, we can replace $\ell(\gamma) \le T$ with $T<\ell(\gamma) \le T+1$ and remove the 
polynomial term $\ell(\gamma)^q$, i.e. we have
\[
\beta(\varphi) = \limsup_{T \to \infty} \frac{1}{T} \log \sum_{\substack{\gamma \in \mathcal P_0^A
\\ T< \ell(\gamma) \le T+1}}  e^{\int_\gamma \varphi}.
\]
 Furthermore, following the arguments in \cite{Kempton}, one can show that the limsup is a limit. Finally, if $\beta(\varphi) \le 0$, choose $c>0$ such that 
$\beta(\varphi+c) = \beta(\varphi)+c >0$; we can then deduce the result for $\varphi$ from the result for $\varphi+c$.
\end{proof}

\begin{remark}
It is instruction to consider why the proof in the preceding section do not immediately apply.
If we wished to use the first proof of Theorem \ref{abelian_livsic_theorem} without further argument, then we would need 
to be able to apply Theorem 6.1 of \cite{CS} in the $A$-full setting.
The key step in proving this would be to show that 
$\widetilde{\Gamma_A} = \R \times \Gamma_A = \R \times \Z^k$,
where
$\Gamma_A$ is the smallest closed subgroup of $\R^k$ containing $\{[\gamma]_A \hbox{ : } \gamma \in \mathcal P\}$
(which is equal to $\Z^k$ by hypothesis) and
$\widetilde{\Gamma_A}$ is the smallest closed subgroup of $\R^{k+1}$ containing
$\{(\ell(\gamma),[\gamma]_A) \hbox{ : } \gamma \in \mathcal P\}$.
Similarly, if we wanted to use the second proof, we would need to show that
$\widetilde{\Gamma_{A,f}} = \R \times \Gamma_{A,f}$,
where
$\Gamma_{A,f}$ is the smallest closed subgroup of $\R^k$ containing $\{(\int_\gamma f,[\gamma]_A )\hbox{ : } \gamma \in \mathcal P\}$
and
$\widetilde{\Gamma_{A,f}}$ is the smallest closed subgroup of $\R^{k+1}$ containing
$\{(\ell(\gamma),\int_\gamma f, [\gamma]_A) \hbox{ : } \gamma \in \mathcal P\}$.

Let us try to show $\widetilde{\Gamma_A} = \R \times \Gamma_A$.
If we follow the argument that $\widetilde \Gamma= \R \times \Gamma$ above, we obtain the equation
\begin{equation}\label{eq:gen_ab_eq}
\frac{1}{2\pi i} \frac{L_Xu}{u} = \alpha + \omega(X),
\end{equation}
for some closed $1$-form $\omega$ with $[\omega] \in (A \otimes \R)^*$ and some $u:M \to K$
which is $C^1$ along flow lines. To proceed , one needs to show that $\alpha=0$.
If $X^t$ is homologically full then we can 
integrate with respect to a measure $\nu$ satisfying $\Phi_\nu=0$ 
 to show this and thus obtain the desired conclusion.
However, if we only assume $A$-full then integrating with respect to a measure $\nu$ satisfying $\Phi_\nu^A=0$ will be enough to kill the $\omega(X)$ term (which corresponds to a cohomology class in $(A \otimes \R)^*$)
but not enough to kill the $u$ term (which corresponds to a general cohomology class).

Nevertheless, we can show that $\alpha=0$ for large classes of Anosov flows.
First, we note that we will have $\alpha=0$ is there is \emph{any} $\nu \in \mathcal M(X)$
for which $\Phi_\nu=0$, since then integrating with respect to $\nu$ will kill both cohomological terms.
However, the existence of such a measure is equivalent not having a global cross-section
(see Proposition 7 of \cite{Sharp93} and section 7 of \cite{Sch}). Thus,
the first proof of Theorem \ref{abelian_livsic_theorem} holds
if the flow $X^t : M \to M$ does 
\emph{not} have a global cross-section.
If the flow has a global cross-section then (up to a velocity change) it is the suspension of an Anosov diffeomorphism.
We observe that the only currently known examples of Anosov diffeomorphisms are topologically conjugate
to algebraic examples on infranilmanifolds (with tori as special cases) and the induced action on homology 
is hyperbolic. In consequence, in these examples, $H_1(M,\Z)$ has rank $1$ and so there are no infinite abelian covers to consider apart from the universal homology cover.

From another point of view, passing to symbolic dynamics and modelling the flow 
by a suspension flow over a
subshift of finite type, we can use (\ref{eq:gen_ab_eq}) to conclude that if $\alpha \ne 0$ then the roof function 
is cohomologous to a locally constant function.
However, the condition that the roof function 
is \emph{not} cohomologous to a locally constant function. is 
generic \cite{FMT} and, for example, is always satisfied when $X^t$ is a co-dimension one flow or when $X^t$ is exponentially mixing.
\end{remark}

\section{Amenable covers}\label{sec:amenable}

Let $\widetilde M$ be a regular cover of $M$ such that
$G = \pi_1(M)/\pi_1(\widetilde M)$ is amenable.
Let $\wtX^t : \wtM \to \wtM$ be the lift of $X^t : M \to M$ to the cover.
We then have a weighted equidistribution theorem, which generalises 
Theorem 9.2 in \cite{Dougall-Sharp1} (which covers the case $\varphi=0$).

\begin{theorem}\label{th:eq-weighted-amenable}
Let 
$X^t : M \to M$ be a 
transitive Anosov flow and let
$\widetilde M$ be a regular cover of $M$ such that
\begin{itemize}
\item
the lifted flow $\widetilde X^t : \widetilde M \to \widetilde M$ is topologically transitive, and
\item the covering group $G$ is amenable.
\end{itemize}
Then for sufficiently large
$\Delta >0$ and any H\"older continuous function $\varphi : M \to \mathbb R$, we have
\[
 \lim_{T \to \infty} 
 \left(\sum_{\substack{\gamma \in \widetilde{\mathcal P}_0 \\ T < \ell(\gamma) \le T+\Delta}}
 e^{\int_\gamma \varphi}\right)^{-1}
 \sum_{\substack{\gamma \in \widetilde{\mathcal P}_0 \\ T < \ell(\gamma) \le T+\Delta}} 
 e^{\int_\gamma \varphi}
\,  \frac{ \int_\gamma\psi}{\ell(\gamma)} 
 = \int \psi \, d\mu(\varphi),
 \]
 for all continuous functions $\psi : M \to \mathbb R$. 
\end{theorem}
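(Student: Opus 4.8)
The plan is to reduce the weighted equidistribution statement for null-Frobenius orbits on the amenable cover to a counting estimate, following the strategy used in the homologically full case but replacing the abelian harmonic analysis by the representation-theoretic/amenability machinery of \cite{Dougall-Sharp1}. First I would pass to symbolic dynamics: model $X^t : M \to M$ by a suspension flow $\sigma^t : \Sigma^r \to \Sigma^r$ over a mixing subshift of finite type, and lift the Frobenius data to a skew-product $\Sigma \times G$ coded by a locally constant (or H\"older) cocycle $\psi_G : \Sigma \to G$, so that $\widetilde{\mathcal P}_0$ corresponds to periodic orbits of $\sigma$ whose $\psi_G$-sum is the identity $e$. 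The $\varphi$-weighted orbital sums over $\widetilde{\mathcal P}_0$ are then analysed via the transfer operator $\mathcal L_{\varphi-s r}$ twisted by the regular representation of $G$ on $\ell^2(G)$ (or, more precisely, via the family of operators obtained by decomposing the induced representation), exactly as in the proof of Theorem 9.2 of \cite{Dougall-Sharp1}, with the extra weight $e^{\int_\gamma \varphi}$ incorporated by replacing the roof-only potential $-sr$ with $\varphi - sr$.

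The key steps, in order, are: (i) establish that amenability of $G$ forces the relevant twisted transfer operator to have spectral radius governed by the untwisted pressure, so that the abscissa of convergence of $\sum_{\gamma \in \widetilde{\mathcal P}_0} e^{\int_\gamma \varphi - s\ell(\gamma)}$ equals $P(\varphi)$ (after the usual normalisation $P(\varphi)=0$); this is the amenable analogue of the growth-rate result (\ref{eq:growth_rate}) and is where the hypothesis that $\widetilde X^t$ is topologically transitive is used, to guarantee aperiodicity and rule out a spectral gap; (ii) upgrade this growth rate to a genuine asymptotic (or at least a two-sided bound sufficient for the ratio) for $\sum_{T < \ell(\gamma) \le T+\Delta,\, \gamma \in \widetilde{\mathcal P}_0} e^{\int_\gamma \varphi}$, for $\Delta$ large enough that the relevant length spectrum is non-lattice on the cover — this is why ``sufficiently large $\Delta$'' appears, and it parallels the use of \cite{Kempton} and the $\Delta$-windowing in \cite{CS}; (iii) differentiate the whole construction: introduce an auxiliary parameter by replacing $\varphi$ with $\varphi + t\psi$ (or perturb the potential by $t\psi$ in the transfer operator) and read off $\int \psi\, d\mu(\varphi)$ as the derivative at $t=0$ of the top eigenvalue, using that the equilibrium state $\mu(\varphi)$ is precisely the measure obtained by adding the homology-correcting forms $\sum \xi_i(\varphi)\omega_i(X)$ so that the relative winding cycle vanishes; and (iv) transfer the symbolic conclusion back to the flow on $M$, noting that $\pi$ is one-to-one on a residual set and the orbital integrals $\int_\gamma \psi$ are unaffected.

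I expect step (i)--(ii) — the counting asymptotic for null-Frobenius orbits weighted by $e^{\int_\gamma\varphi}$ — to be the main obstacle. For $\varphi = 0$ this is exactly Theorem 9.2 of \cite{Dougall-Sharp1}, but inserting the H\"older weight $\varphi$ changes the base potential of the subshift from $-sr$ to $\varphi - sr$, and one must check that all the ingredients of that proof (the Perron--Frobenius theory for the twisted operators, the amenable estimate comparing twisted and untwisted spectral radii, the Tauberian/renewal argument producing the $e^{\beta(\varphi)T}$ asymptotic, and the non-lattice condition securing the limit rather than merely a $\limsup$) go through verbatim for the perturbed potential. The amenability estimate itself is robust — it only uses that $\ell^2(G)$ weakly contains the trivial representation and that the potential is real-valued — so the core inequality survives; the bookkeeping is in re-running the renewal theory with $\varphi - sr$ in place of $-sr$ and keeping track of the pressure $\beta(\varphi) = P(\varphi + \sum \xi_i(\varphi)\omega_i(X))$. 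Once the weighted counting asymptotic is in hand, step (iii) is the standard pressure-derivative argument and step (iv) is routine, so the bulk of the write-up should be a careful citation of \cite{Dougall-Sharp1} together with the remark that its proof is weight-insensitive, mirroring how the first proof of Theorem \ref{abelian_livsic_theorem} leaned on \cite{CS}.
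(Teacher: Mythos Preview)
Your overall architecture is close to the paper's: pass to symbolic dynamics, encode the Frobenius constraint as a $G$-skew-product $T_\alpha$ on $\Sigma\times G$, feed the H\"older weight $\varphi$ into the potential, invoke the amenability input from \cite{Dougall-Sharp1}, and deduce equidistribution. Two points diverge from the paper, and the first is a genuine slip.

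In step (i) you assert that amenability makes the twisted spectral radius equal to the \emph{untwisted} one, so that the abscissa of $\sum_{\gamma\in\widetilde{\mathcal P}_0} e^{\int_\gamma\varphi-s\ell(\gamma)}$ is $P(\varphi)$. This is not correct: already for $G=\mathbb Z$ the Gurevi\v c pressure on the extension is strictly below the base pressure whenever the equilibrium state of $\varphi$ has nonzero winding cycle. The result the paper actually uses (Theorem~5.1 of \cite{Dougall-Sharp1}, stated here as Proposition~\ref{prop:amenable_implies_shift_pressure}) is that amenability forces $P_{\mathrm G}(F,T_\alpha)=P_{\mathrm G}(F,T_{\bar\alpha})$, i.e.\ the $G$-extension has the same Gurevi\v c pressure as the \emph{abelianised} extension $T_{\bar\alpha}$. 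One then reads off the abscissa as $\beta(\varphi)=P\bigl(\varphi+\sum_i\xi_i(\varphi)\omega_i(X)\bigr)$ from the abelian growth rate already established in \cite{CS}. You do land on $\beta(\varphi)$ in your final paragraph, but the route through ``twisted $=$ untwisted $=P(\varphi)$'' would not produce it.

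Second, the paper bypasses both the sharp asymptotic you anticipate in step~(ii) and the differentiation in step~(iii). Once the growth rate of Proposition~\ref{prop:transfer_to_abelian} is established as an honest limit for large $\Delta$ (via the correction to \cite{Dougall-Sharp1}), equidistribution follows directly from the large-deviations argument of section~9 of \cite{Dougall-Sharp1}: for each $\epsilon>0$ the orbits with $\bigl|\ell(\gamma)^{-1}\int_\gamma\psi-\int\psi\,d\mu(\varphi)\bigr|>\epsilon$ have strictly smaller exponential growth rate and hence vanish in the normalised average. This needs only the growth rate, not a leading-order constant, so it is lighter than the pressure-derivative route you propose.
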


We note that the hypothesis 
$\widetilde X^t : \widetilde M \to \widetilde M$ is topologically transitive
implies that the maximal abelian subcover of $\widetilde{M}$ is full.
Using the large deviations approach of section 9 of \cite{Dougall-Sharp1}, 
Theorem \ref{th:eq-weighted-amenable} follows once one has shown the following.

\begin{proposition} \label{prop:transfer_to_abelian}
For sufficiently large $\Delta>0$,
\[
\lim_{T \to \infty} \frac{1}{T} \log
\sum_{\substack{\gamma \in \widetilde{\mathcal P}_0 \\ T < \ell(\gamma) \le T+\Delta}}
e^{\int_\gamma \varphi} =
P\left(\varphi + \sum_{i=1}^b \xi_i(\varphi)\omega_i(X)\right).
\]
\end{proposition}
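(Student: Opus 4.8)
The plan is to reduce the growth-rate statement over $\widetilde{\mathcal P}_0$ (periodic orbits with trivial Frobenius class in the amenable group $G$) to the corresponding statement over $\mathcal P_0$ (orbits that are null-homologous in $M$), which is precisely the content of \eqref{eq:growth_rate} specialised to the maximal abelian subcover, and then invoke that result. The key observation is that $\widetilde{\mathcal P}_0 \subseteq \mathcal P_0$ once we pass to the maximal abelian subcover $M_{\mathrm{ab}}$ of $\widetilde M$: if $\gamma$ lifts to a closed orbit in $\widetilde M$ then it certainly lifts to a closed orbit in $M_{\mathrm{ab}}$, so $[\gamma]=0$ in $H_1(M,\mathbb Z)/\mathfrak F$. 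Hence the sum over $\widetilde{\mathcal P}_0$ is dominated by the sum over $\mathcal P_0$, giving the upper bound
\[
\limsup_{T \to \infty} \frac{1}{T} \log
\sum_{\substack{\gamma \in \widetilde{\mathcal P}_0 \\ T < \ell(\gamma) \le T+\Delta}}
e^{\int_\gamma \varphi} \le
P\!\left(\varphi + \sum_{i=1}^b \xi_i(\varphi)\omega_i(X)\right),
\]
using \eqref{eq:growth_rate} (the version for the full homology cover, i.e. $A = H_1(M,\mathbb Z)/\mathfrak F$, $k=b$) together with the fact that replacing the window $(T,T+1]$ by $(T,T+\Delta]$ only changes the exponential rate by a bounded multiplicative factor.

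For the lower bound — the substantive direction — I would follow the large deviations / symbolic-dynamics machinery of section 9 of \cite{Dougall-Sharp1}. Model the flow by a suspension $\sigma^t : \Sigma^r \to \Sigma^r$ over a mixing subshift of finite type, with a group-valued skewing function recording the Frobenius class in $G$; then $\widetilde{\mathcal P}_0$ corresponds to periodic orbits whose $G$-valued weight is trivial. Because $G$ is amenable, one has a Følner sequence and can build, for any $\varepsilon>0$, a positive-density family of \emph{almost-trivial-weight} periodic words by concatenating long blocks whose weights lie in a large Følner set and then closing them up with a bounded correction word; amenability guarantees the correction does not cost exponentially in the count. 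This is exactly the mechanism by which Theorem 9.2 of \cite{Dougall-Sharp1} recovers the full abelian growth rate for $\varphi = 0$, and the weighted version goes through verbatim once $e^{\int_\gamma\varphi}$ is carried along as a Hölder weight (the pressure $P(\varphi + \sum_i \xi_i(\varphi)\omega_i(X))$ is insensitive to the bounded corrections). One then needs the transitivity hypothesis on $\widetilde X^t$ to ensure the skew-product subshift is itself topologically transitive, which is what makes the $G$-weights equidistribute well enough for the Følner argument; this is where the remark that transitivity of $\widetilde X^t$ forces the maximal abelian subcover to be full is used, so that the abelian growth rate we are matching is the honest $P(\varphi + \sum_{i=1}^b \xi_i(\varphi)\omega_i(X))$ with $\xi(\varphi)$ chosen to make the winding cycle vanish.

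The main obstacle I anticipate is making the lower bound fully rigorous in the weighted ($\varphi \ne 0$) setting: one must check that the Følner-set concatenation argument, which for counting produces a sub-family of $\widetilde{\mathcal P}_0$ of the right exponential size, also produces the right \emph{weighted} sum — i.e. that the orbits one constructs are not atypically light for the weight $e^{\int_\gamma \varphi}$. The clean way around this is to run the argument with respect to the equilibrium state $\mu(\varphi)$ (equivalently, tilt by $\varphi$ and its homology-correction term before applying the amenable machinery), so that typical orbits automatically carry weight $\asymp e^{(\text{rate})\ell(\gamma)}$; a large-deviations lower bound then supplies the matching exponential lower bound for the weighted sum. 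Reducing $\beta(\varphi) \le 0$ to $\beta(\varphi)>0$ by adding a constant $c$, exactly as in the sketch proof of \eqref{eq:growth_rate}, and promoting the resulting $\limsup$ to a genuine $\lim$ via the arguments of \cite{Kempton}, completes the proof.
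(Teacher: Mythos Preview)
Your upper bound and the overall strategy of reducing to the abelian growth rate are exactly what the paper does. The difference is in how you establish the matching lower bound.

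You propose to run a F\o{}lner-set concatenation argument directly, building by hand a sub-family of orbits in $\widetilde{\mathcal P}_0$ of the right exponential size, and you correctly flag the difficulty of carrying the H\"older weight $e^{\int_\gamma\varphi}$ through that construction. The paper bypasses this difficulty entirely. Instead of re-deriving the amenable lower bound, it invokes Theorem~5.1 of \cite{Dougall-Sharp1} (stated here as Proposition~\ref{prop:amenable_implies_shift_pressure}), which asserts the equality of Gurevi\v{c} pressures
\[
P_{\mathrm G}(F,T_\alpha)=P_{\mathrm G}(F,T_{\bar\alpha})
\]
for \emph{any} H\"older potential $F$ on the base shift. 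Applying this with $F=-cr+\Phi$ and translating back to the flow via a Dirichlet series argument (comparing the abscissae of convergence of $\sum_{\gamma\in\widetilde{\mathcal P}_0}e^{-s\ell(\gamma)+\int_\gamma\varphi}$ and its symbolic counterpart), one reads off $\mathfrak P(\varphi)=\overline{\mathfrak P}(\varphi)$ immediately. Since the Gurevi\v{c} pressure identity is already a weighted statement, the issue you anticipate for $\varphi\neq 0$ simply does not arise.

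Two smaller points. First, what section~9 of \cite{Dougall-Sharp1} supplies is the large-deviations passage from the growth-rate identity to equidistribution (i.e.\ from Proposition~\ref{prop:transfer_to_abelian} to Theorem~\ref{th:eq-weighted-amenable}); the growth rate itself is governed by Theorem~5.1 there, not section~9. Second, the upgrade from $\limsup$ to $\lim$ is obtained via the arguments in the \emph{correction} to \cite{Dougall-Sharp1}, rather than \cite{Kempton}; the latter was used in Section~\ref{sec:general_abelian_cover} for the abelian case, but in the amenable setting the paper appeals to the former.
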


Before we prove this, we need to introduce some more ideas from thermodynamic
formalism, particularly that of Gurevi\v{c} pressure. Recall the notation introduced in the 
proof of Lemma \ref{lem:fundamental}. The flow $\wtX^t : \wtM \to \wtM$ may be modelled
by a suspension flow $\tilde \sigma^t$ over a skew-product system
$T_\alpha : \Sigma \times G \to \Sigma \times G$ defined by
$T_\alpha(x,g)=(\sigma x,g\alpha(x))$, where $\alpha : \Sigma \to G$ is some continuous function depending on two co-ordinates, with a roof function $\tilde r: \Sigma \times G \to \mathbb R^{>0}$
satisfying $\tilde r(x,g)=r(x)$; to simplify notation, we will write $r$ instead of $\tilde r$.
The function $\sum_{i=1}^b \xi_i(\varphi)\omega_i(X) : M \to \mathbb R$ induces a function
$\Xi : \Sigma \to \mathbb R$, defined by
\[
\Xi(x) := \int_0^{r(x)} \left(\varphi +\sum_{i=1}^b \xi_i(\varphi)\omega_i(X) \right) \circ \pi(\sigma^t(x,0))
\, dt.
\]
It is a standard result (Proposition 6.1 of \cite{PP}) that 
\[
P\left(-P\left(\varphi + \sum_{i=1}^b \xi_i(\varphi)\omega_i(X)\right)r +\Phi +\Xi\right)=0.
\]

Now let $\oG$ be the torsion free part of the abelianization of $G$ and let $\mathfrak a : G \to \oG$ be the natural projection homomorphism. 
This gives a regular abelian cover of $M$ and transitivity of $\wtX^t : \wtM \to \wtM$
gives that $X^t : M \to M$ is $\oG$-full.
There is also a skew-product $T_{\bar \alpha} : \Sigma \times \mathbb Z^b \to
\Sigma \times \mathbb Z^b$ defined 
by
$T_{\bar \alpha}(x,m)=(\sigma x,m + \bar \alpha(x))$, where $\bar \alpha = \mathfrak a \circ \alpha$.
Both skew-products $T_\alpha$ and $T_{\bar \alpha}$ are topologically transitive 
countable state Markov shifts (where transitivity is given by Lemma 7.3 of \cite{Dougall-Sharp1}),
so we can define the \emph{Gurevi\v{c} pressure} of locally H\"older continuous
potentials, see Sarig's original paper \cite{Sarig-etds} or his survey 
\cite{Sarig-survey}.

A H\"older continuous function $F : \Sigma \to \mathbb R$ induces functions 
$\tilde F : \Sigma \times G \to \mathbb R$ and $\bar F : \Sigma \times \mathbb Z^b \to \mathbb R$
by $\tilde F(x,g) = \bar F(x,m) =F(x)$. It will not cause any confusion to denote all three functions by $F$.
For such functions, the Gurevi\v{c} pressure of $F$ with respect to $T_\alpha$ and
$T_{\bar \alpha}$, is defined by
\[
P_{\mathrm{G}}(F,T_\alpha)=
\limsup_{n \to \infty} \frac{1}{n} \log
\sum_{\substack{\sigma^nx=x \\ \alpha_n(x)=e}} e^{F^n(x)}
\]
and
\[
P_{\mathrm{G}}(F,T_{\bar \alpha})
=
\limsup_{n \to \infty} \frac{1}{n} \log
\sum_{\substack{\sigma^nx=x \\ \bar \alpha^n(x)=0}} e^{F^n(x)},
\]
respectively. (Here we have used that $T_\alpha^n(x,g) = (\sigma^nx,g\alpha^n(x))$,
where $\alpha^n(x):=\alpha(x)\alpha(\sigma x) \cdots \alpha(\sigma^{n-1}(x)$.)

The following result of \cite{Dougall-Sharp1} is key to our analysis.

\begin{proposition}[Theorem 5.1 of \cite{Dougall-Sharp1}]\label{prop:amenable_implies_shift_pressure}
If $T_\alpha$ is topologically transitive and $G$ is amenable then
\[
P_{\mathrm{G}}(F,T_\alpha) =P_{\mathrm{G}}(F,T_{\bar \alpha}).
\]
\end{proposition}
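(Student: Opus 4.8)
The plan is to prove the two inequalities between $P_{\mathrm{G}}(F,T_\alpha)$ and $P_{\mathrm{G}}(F,T_{\bar\alpha})$ separately: one is formal, while the other carries all the weight, with amenability entering through an almost-invariant-vector (F\o lner/Reiter) argument for the transfer operator of the skew product.

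First I would dispose of $P_{\mathrm{G}}(F,T_\alpha)\le P_{\mathrm{G}}(F,T_{\bar\alpha})$. If $\sigma^n x=x$ and $\alpha^n(x)=e$ then $\bar\alpha^n(x)=\mathfrak a(\alpha^n(x))=0$, while $F^n(x)$ depends only on $x$. Since $T_\alpha$ and $T_{\bar\alpha}$ are topologically transitive (Lemma 7.3 of \cite{Dougall-Sharp1}) their Gurevi\v c pressures are independent of the chosen state, so I may compute both $\limsup$s over periodic points passing through a common cylinder $[a]$ of $\Sigma$; the partition function defining $P_{\mathrm{G}}(F,T_\alpha)$ is then dominated term by term by the one defining $P_{\mathrm{G}}(F,T_{\bar\alpha})$, which gives the inequality.

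Next I would set up the hard direction via a reduction. Let $K=\ker\mathfrak a\le G$; as a subgroup of $G$ it is amenable, it is normal, and $G/K\cong\oG\cong\mathbb Z^b$. The map $(x,g)\mapsto(x,\mathfrak a(g))$ intertwines $T_\alpha$ with $T_{\bar\alpha}$, and left translation by $K$ is a free action on $\Sigma\times G$ commuting with $T_\alpha$ whose quotient map is exactly this one; hence $T_\alpha$ is a free $K$-cover of $T_{\bar\alpha}$. After subtracting $P_{\mathrm{G}}(F,T_{\bar\alpha})$ from $F$ (which I may assume finite, else there is nothing to prove), it remains to show that a free cover by an amenable group does not decrease the Gurevi\v c pressure, i.e.\ $P_{\mathrm{G}}(F,T_\alpha)\ge 0$. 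For this I would pass to Ruelle operators: fibrewise over $T_{\bar\alpha}$, the operator $\widehat{\mathcal L}$ of $T_\alpha$ is a $K$-cocycle twist of the operator of $T_{\bar\alpha}$ built from the regular representation of $K$ on $\ell^2(K)$, and by Sarig's theory of countable Markov shifts \cite{Sarig-etds}, $e^{P_{\mathrm{G}}(F,T_\alpha)}$ is its spectral radius, which is always $\le 1$. Because $\alpha$ depends on finitely many coordinates over the finite alphabet of $\Sigma$, only finitely many elements of $K$ occur as one-step (and short bridging) cocycles; call this finite symmetric set $S$. Amenability of $K$ supplies unit vectors $\xi_N=|\Phi_N|^{-1/2}\mathbbm{1}_{\Phi_N}\in\ell^2(K)$ with $\|\lambda_K(s)\xi_N-\xi_N\|\to 0$ for all $s\in S$; pairing these F\o lner vectors with an approximate eigenvector of the base operator at spectral value $1$ produces vectors fixed by $\widehat{\mathcal L}$ up to an error controlled by the F\o lner defect, whence the spectral radius of $\widehat{\mathcal L}$ is $\ge 1-o(1)$, hence equals $1$, i.e.\ $P_{\mathrm{G}}(F,T_\alpha)\ge 0$. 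Combined with the first inequality, this is Proposition \ref{prop:amenable_implies_shift_pressure}.

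The hard part, and the reason this is delicate rather than routine, is that the base $T_{\bar\alpha}$ is a non-compact countable Markov shift: at the critical spectral value there need be no honest Ruelle--Perron--Frobenius eigenfunction (a zero-drift $\mathbb Z^b$-extension is typically only null recurrent), so the almost-invariant-vector step cannot be run with a genuine eigenfunction and must be carried out at the level of approximate eigenvectors, or equivalently of the combinatorial quasi-multiplicativity
\[
\sum_{b,b'\in B} Z^{(G)}_{m+n+\ell_0}(g b h b')\ \ge\ c\,Z^{(G)}_m(g)\,Z^{(G)}_n(h)
\]
of the weighted counts $Z^{(G)}_n(g)=\sum_{\sigma^n x=x,\ x_0=a,\ \alpha^n(x)=g}e^{F^n(x)}$, valid with a finite $B\subset G$ and fixed $c>0$, $\ell_0\in\mathbb N$ by the big-image--big-preimage (mixing) property of the finite-alphabet SFT $\Sigma$. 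One then has to verify that this estimate transports the F\o lner defect all the way to the Gurevi\v c pressure, and separately that everything is insensitive to $F$ being only locally H\"older; this bookkeeping is precisely the content of the topological Markov chain version of Kesten's amenability criterion (Stadlbauer), which one either invokes or reproves in the present setting.
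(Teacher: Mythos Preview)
The paper does not give a proof of this proposition at all: it is simply quoted as Theorem~5.1 of \cite{Dougall-Sharp1} and used as a black box in the proof of Lemma~4.4. There is therefore no argument in the present paper to compare your attempt against.

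For what it is worth, your outline is in the spirit of the proof in \cite{Dougall-Sharp1}: the inequality $P_{\mathrm G}(F,T_\alpha)\le P_{\mathrm G}(F,T_{\bar\alpha})$ is indeed the trivial direction, and the content lies in a Kesten--Stadlbauer type almost-invariant-vector argument for the reverse inequality. One point worth flagging is your claim that ``only finitely many elements of $K$ occur as one-step cocycles'': once you realise $T_\alpha$ as a $K$-extension of $T_{\bar\alpha}$ via a set-theoretic section $\oG\to G$, the induced $K$-valued cocycle over the base $\Sigma\times\mathbb Z^b$ generally depends on the $\mathbb Z^b$-coordinate and need not be finitely valued, and the base $T_{\bar\alpha}$ does not have the big-image--preimage property. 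You yourself note in the final paragraph that this is exactly where the difficulty lies, and that one must work instead with the quasi-multiplicativity of the weighted counts $Z_n^{(G)}(g)$ coming from the \emph{finite} alphabet of $\Sigma$; that is indeed how \cite{Dougall-Sharp1} proceeds, so your sketch is an accurate summary of that argument rather than an alternative to it.
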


Fix $\Delta>0$ and define
\[
\overline{\mathfrak P}(\varphi) :=
\lim_{T \to \infty} \frac{1}{T} \log
\sum_{\substack{\gamma \in \mathcal P_0 \\ T < \ell(\gamma) \le T+\Delta}}
e^{\int_\gamma \varphi}.
\]
We know from Corollary 6.2 of \cite{CS} that
\[
\overline{\mathfrak P}(\varphi)=
P\left(\varphi + \sum_{i=1}^b \xi_i(\varphi)\omega_i(X)\right),
\]
so that 
\[
P\left(-\overline{\mathfrak P}(\varphi)r +\Phi +\Xi\right)=0.
\]

Now let
\[
\mathfrak P(\varphi) :=
\limsup_{T \to \infty} \frac{1}{T} \log
\sum_{\substack{\gamma \in \widetilde{\mathcal P}_0 \\ T < \ell(\gamma) \le T+\Delta}}
e^{\int_\gamma \varphi}.
\]
Clearly, $\mathfrak P(\varphi) \le \overline{\mathfrak P}(\varphi)$ and we claim that we have equality.

\begin{lemma}
We have
\[
\mathfrak P(\varphi) = \overline{\mathfrak P}(\varphi).
\]
\end{lemma}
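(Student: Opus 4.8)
The plan is to transfer the question to the symbolic models $T_\alpha$ and $T_{\bar\alpha}$ and then feed it into the amenability input, Proposition~\ref{prop:amenable_implies_shift_pressure}. Since every $\gamma \in \widetilde{\mathcal P}_0$ has $\langle\gamma\rangle = \{e\}$ and hence $[\gamma]_{\oG} = \mathfrak a(\langle\gamma\rangle) = 0$, we have $\widetilde{\mathcal P}_0 \subseteq \mathcal P_0$, so the inequality $\mathfrak P(\varphi) \le \overline{\mathfrak P}(\varphi)$ is automatic and only $\mathfrak P(\varphi) \ge \overline{\mathfrak P}(\varphi)$ needs proof. Write $\beta := \overline{\mathfrak P}(\varphi) = P(\varphi + \sum_{i=1}^b\xi_i(\varphi)\omega_i(X))$, so that $P_\sigma(-\beta r + \Phi + \Xi) = 0$ on the base $\Sigma$, and recall that $\beta = \min_\xi P(\varphi + \sum_i\xi_i\omega_i(X))$.

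The first step is the symbolic dictionary. Under the semiconjugacy $\pi$, a prime periodic orbit $\gamma$ of the flow with $\ell(\gamma) \in (T,T+\Delta]$ corresponds to a $\sigma$-periodic orbit of some period $n$ through a point $x$ with $r^n(x) = \ell(\gamma)$, $\Phi^n(x) = \int_\gamma\varphi$ and $\Xi^n(x) = \langle\xi(\varphi),[\gamma]_{\oG}\rangle$, while $\gamma \in \widetilde{\mathcal P}_0$ iff $\alpha^n(x) = e$ and $\gamma \in \mathcal P_0$ iff $\bar\alpha^n(x) = 0$. Following the analysis of Sections~7--9 of \cite{Dougall-Sharp1}, one then identifies $\mathfrak P(\varphi)$ with the unique zero of the (strictly decreasing, where finite) function $s \mapsto P_{\mathrm G}(-sr + \Phi, T_\alpha)$, and likewise $\overline{\mathfrak P}(\varphi) = \beta$ with the unique zero of $s \mapsto P_{\mathrm G}(-sr + \Phi, T_{\bar\alpha})$. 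This is exactly the place where a large window $\Delta$ is used, since $T_\alpha$ and $T_{\bar\alpha}$ are only assumed topologically transitive and the roof function may carry arithmetic obstructions.

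It then remains to show $P_{\mathrm G}(-\beta r + \Phi, T_\alpha) = 0$. First observe that $\alpha^n(x) = e$ forces $\bar\alpha^n(x) = \mathfrak a(\alpha^n(x)) = 0$ and hence $\Xi^n(x) = 0$, and similarly $\bar\alpha^n(x) = 0$ forces $\Xi^n(x) = 0$; so both Gurevi\v{c} pressures in question are unchanged on replacing $\Phi$ by $\Phi + \Xi$, and it suffices to prove $P_{\mathrm G}(-\beta r + \Phi + \Xi, T_\alpha) = 0$. Now $-\beta r + \Phi + \Xi$ is a locally H\"older function on the base $\Sigma$, and because $\xi(\varphi)$ minimises $\xi \mapsto P(\varphi + \sum_i\xi_i\omega_i(X))$ with minimum $\beta$, the function $t \mapsto P_\sigma\big((-\beta r + \Phi + \Xi) + \langle t,\bar\alpha\rangle\big)$ is nonnegative and vanishes at $t = 0$, so its infimum $0$ is attained there; the standard description of the Gurevi\v{c} pressure of a $\mathbb Z^b$-extension (see \cite{Dougall-Sharp1}) then gives $P_{\mathrm G}(-\beta r + \Phi + \Xi, T_{\bar\alpha}) = 0$. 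Finally Proposition~\ref{prop:amenable_implies_shift_pressure}, applied with $F = -\beta r + \Phi + \Xi$, yields $P_{\mathrm G}(-\beta r + \Phi + \Xi, T_\alpha) = P_{\mathrm G}(-\beta r + \Phi + \Xi, T_{\bar\alpha}) = 0$, whence $\mathfrak P(\varphi) = \beta = \overline{\mathfrak P}(\varphi)$.

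The pressure bookkeeping and the appeal to amenability are essentially formal. The main obstacle is the second step: rigorously matching periodic orbits of the flow counted in a length window with periodic points of the countable Markov shifts $T_\alpha$, $T_{\bar\alpha}$ counted by symbolic period, and identifying the resulting exponential growth rates with zeros of Gurevi\v{c} pressure functions in the absence of mixing. This is precisely the technical work already carried out in \cite{Dougall-Sharp1}, and I would quote it rather than reprove it.
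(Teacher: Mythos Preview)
Your proposal is correct and follows essentially the same route as the paper: transfer the growth rates to the symbolic extensions $T_\alpha$ and $T_{\bar\alpha}$, identify them as zeros of the Gurevi\v{c} pressure functions $s\mapsto P_{\mathrm G}(-sr+\Phi,\,\cdot\,)$, and then invoke Proposition~\ref{prop:amenable_implies_shift_pressure} to equate the two. The paper packages the flow-to-shift transfer via the Dirichlet series $S_1(s),S_2(s)$ and their common abscissa of convergence rather than quoting \cite{Dougall-Sharp1} directly, and it does not pass through the auxiliary $\Xi$-term (your observation that $\Xi^n$ vanishes on the relevant periodic points makes that detour harmless but unnecessary), but these are cosmetic differences.
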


\begin{proof}
Consider the series
\[
S_1(s) := \sum_{\substack{\gamma \in \widetilde{\mathcal P}_0}} e^{-s\ell(\gamma)+\int_\gamma \varphi}.
\]
The series $S_1(s)$ has abscissa of convergence
$\mathfrak P(\varphi)$, and so $\mathfrak P(\varphi) = \overline{\mathfrak P}(\varphi)$
if $\overline{\mathfrak P}(\varphi)$ is the abscissa of convergence of $S_1(s)$.

Now consider the corresponding series for $T_\alpha$,
\[
S_2(s) := \sum_{n=1}^\infty \frac{1}{n} \sum_{\substack{\sigma^n x=x \\\alpha^n(x)=e}}
e^{-sr^n(x) + \Phi^n(x)}.
\]
This may involve some overcounting compared to $S_1(s)$ but it is standard that 
$S_1(s)-S_2(s)$ converges for $\mathrm{Re}(s) > \overline{\mathfrak P}(\varphi) -\epsilon$, for some
$\epsilon>0$.
Thus the abscissa of convergence of $S_1(s)$ is $\overline{\mathfrak P}(\varphi)$ 
 if and only if the abscissa of convergence of
$S_2(s)$ is $\overline{\mathfrak P}(\varphi)$.

The abscissa of convergence of $S_2(s)$ is given by the value $c$ for which 
$P_{\mathrm{G}}(-cr+\Phi,T_{\alpha})=0$.
By Proposition \ref{prop:amenable_implies_shift_pressure},
\[
P_{\mathrm{G}}(-cr+\Phi,T_{\alpha}) = P_{\mathrm{G}}(-cr+\Phi,T_{\bar \alpha})
\]
and so $c= \overline{\mathfrak P}(\varphi)$, as required.
\end{proof}

To complete the proof of Proposition \ref{prop:transfer_to_abelian} we observe that the arguments in 
section 2 of the correction to \cite{Dougall-Sharp1} show that, provided $\Delta>0$ is sufficiently large,
the limsup defining $\mathfrak P(\varphi)$ is a limit.

The proof of Theorem \ref{th:amenable} now follows from Theorem \ref{th:eq-weighted-amenable}
exactly as in the first proof of Theorem \ref{abelian_livsic_theorem}.

\begin{remark}
An interesting example of an amenable cover is that associated to the second commutator
of $\pi_1(M)$. Let $M$ be a quotient $M = U/\Gamma$, where $U$ is the universal cover 
of $M$ and
$\Gamma \cong \pi_1(M)$ is a group of isometries acting freely on $U$.
The universal abelian cover of $M$ is the regular cover with covering
group $\Gamma/\Gamma' \cong H_1(M,\mathbb Z)$, where $\Gamma'=[\Gamma,\Gamma]$ is the 
commutator subgroup (derived subgroup) of $\Gamma$, generated by the set of all
commutators in $\Gamma$. The second commutator subgroup (second derived subgroup)
$\Gamma''$ is the subgroup generated by all commutators of commutators, i.e. by 
all elements of the form $[[a,b],[c,d]]$ for $a,b,c,d \in \Gamma$.
The quotient $\Gamma/\Gamma''$ is metabelian and hence amenable.
In terms of a flow on $M$, a periodic orbit has trivial Frobenius class for this cover if and only if it is 
null-homologous and lifts to a null-homologous periodic orbit on the universal abelian cover.
If $X^t : M \to M$ is a geodesic flow over a compact manifold with negative sectional curvatures
then the lifted flow on the $\Gamma/\Gamma''$ cover is topologically transitive and so
Theorem \ref{th:amenable} applies.
\end{remark}

\end{document}